\documentclass[11pt,letterpaper]{amsart}%
\usepackage{amsmath}
\usepackage{amsfonts}
\usepackage{amssymb}
\usepackage{graphicx}
\usepackage{amsmath}
\usepackage{txfonts}
\usepackage{graphicx}
\usepackage{amsthm}
\usepackage{amsfonts}
\usepackage{amssymb}
\usepackage[utf8]{inputenc}
\usepackage{xcolor}
\usepackage{ifthen}%
\setcounter{MaxMatrixCols}{30}

\providecommand{\U}[1]{\protect\rule{.1in}{.1in}}
\providecommand{\U}[1]{\protect\rule{.1in}{.1in}}
\newif\ifcomments
\commentsfalse
\newboolean{bluecomments}
\setboolean{bluecomments}{true}
\newcommand{\bluecomment}[1]{\ifthenelse{\boolean{bluecomments}}{\textcolor{blue}{#1}}{}}
\newcounter{mycounter}
\numberwithin{mycounter}{section}
\newtheorem{theorem}[mycounter]{Theorem}
\theoremstyle{plain}
\newtheorem{corollary}[mycounter]{Corollary}

\newtheorem{lemma}[mycounter]{Lemma}

\numberwithin{equation}{section}
\theoremstyle{definition}
\newtheorem{definition}[mycounter]{Definition}

\newcommand{\R}{\mathbb{R}}
\newcommand{\bnab}{\bar \nabla} 
\newcommand{\wedges}{\wedge \dots \wedge} 

\begin{document}
\title[Normal-flat calibrated submanifolds]{Calibrated submanifolds with flat normal bundles}
\date{\today}
\author{W. Jacob Ogden}
\address{Department of Mathematics\\
University of Washington, Seattle, WA 98105}
\email{wjogden@uw.edu}

\begin{abstract}
We show that submanifolds of Euclidean space which are calibrated by a constant-coefficient differential form and have flat normal bundles are planes. In fact, in a Riemannian manifold equipped with a parallel calibration, a calibrated submanifold subject to the condition that all shape operators commute is totally geodesic. 
\end{abstract}
\maketitle

\section{Introduction} 
In the theory of minimal submanifolds, there is a stark contrast between hypersurfaces (submanifolds of codimension 1) and submanifolds of high codimension. In codimension 1, objects such as variation fields and the mean curvature can be identified with scalars, while the vectorial nature of these objects cannot be avoided in high codimension. Several landmark results in the field, such as Simons' proof of the Bernstein conjecture in dimensions up to $6$ \cite{Simons} and the curvature estimates of Schoen-Simon-Yau \cite{SchoenSimonYau} depend crucially on the fact that hypersurfaces have rank-1 normal bundles.

The situation for high-codimension minimal submanifolds is more complicated. Lawson and Osserman \cite{LawsonOsserman} gave an example of a nonflat graphical minimal cone in dimension 4 and codimension 3. Their example is the cone over the graph of a scaled Hopf map $S^3 \to S^2$. Barbosa \cite{Barbosa}, and later Fischer-Colbrie \cite{Fischer-Colbrie}, proved rigidity for graphical minimal cones of dimension 3 in arbitrary codimension. While graphical minimal hypersurfaces over convex domains are volume-minimizing, Osserman \cite{Osserman} gave an example of an entire codimension-2 minimal graph which is not stable.

In recent decades, there has been success extending results for hypersurfaces to high codimension under the simplifying assumption that the minimal submanifold has a flat normal bundle. Motivation for this extra hypothesis can be found by considering Simons' equation for the second fundamental form of a minimal submanifold \cite{Simons}. In codimension 1, the equation simplifies to the incredibly useful form 
$$  \triangle A = - |A|^2 A,$$ while in high codimension, the equation is much more complicated and involves commutators of shape operators. In Euclidean space, submanifolds with flat normal bundles have shape operators which commute, so the commutator terms in Simons' equation vanish. This allows one to recover useful tools such as an improved Kato inequality. Using this improved Kato inequality, Xin \cite{Xin} proved that a complete $n$-dimensional, $n\leq 5$, minimal submanifold with flat normal bundle in $\R^{n+m}$ which is locally a graph over a fixed $n$-plane and satisfies growth hypotheses for the volume and volume element is a plane. Later, the dimension restriction $n \leq 5$ was removed by Smoczyk, Wang, and Xin \cite{SmoczykWangXin}. Curvature estimates of Schoen-Simon-Yau were also extended to the setting of normal-flat high-codimension minimal submanifolds by Smoczyk, Wang, and Xin \cite{SmoczykWangXin} and by M.-T. Wang \cite{Wang}, who also proved that minimal graphs with flat normal bundles are stable. Fu \cite{Fu, Fu2} and Seo \cite{Seo} proved Bernstein-type theorems for minimal submanifolds with flat normal bundles under curvature decay and superstability assumptions. 

Among high-codimension minimal submanifolds, the calibrated submanifolds introduced by Harvey and Lawson \cite{HarveyLawson79, HarveyLawson82} are of great interest. A calibration on a Riemannian manifold $M$ is a closed $n$-form $\omega$ with the property that for any vectors $X_1, \dots X_n \in T_p M$, 
\begin{equation} \label{calibration} \omega (X_1, \dots, X_n ) \leq | X_1 \wedges X_n |. \end{equation} 
A submanifold $\Sigma$ is said to be calibrated if any oriented frame for $T\Sigma$ saturates \eqref{calibration}. The fundamental fact about calibrated submanifolds is that they are homologically volume-minimizing. Federer \cite{Federer} observed that complex submanifolds of K\"ahler manifolds are calibrated by exterior powers of the K\"ahler form due to the classical Wirtinger inequality. Harvey and Lawson gave several additional examples of constant-coefficient calibrations on $\R^m$ which admit large families of calibrated submanifolds: the special Lagrangian, associative, coassociative, and Cayley calibrations. These calibrations also exist as parallel forms on manifolds of special holonomy (see \cite{Joyce}, for instance). 

There is much interest in determining the extent to which Liouville-Bernstein rigidity theorems extend to calibrated submanifolds. The Lawson-Osserman Hopf cone is in fact coassociative \cite{HarveyLawson82}, giving an immediate counterexample to a general Bernstein theorem for calibrated submanifolds. Under various additional hypotheses, rigidity theorems for special Lagrangian graphs hold (\cite{Yuan, TsuiWang, Yuan2, WarrenYuan, OgdenYuan}), but explicit entire solutions constructed by Warren \cite{Warren} and Li \cite{Li2} show that the general Bernstein theorem cannot hold for special Lagrangian graphs. It is not known whether nonflat graphical special Lagrangian cones exist. Bernstein-type results for other classes of calibrated submanifolds were obtained recently by Lien and Tsai \cite{LienTsai}. 

Li \cite{Li} studied special Lagrangian cones with flat normal bundles and proved that graphical special Lagrangian cones in $\R^{n} \times \R^n$ with flat normal bundles are planes for $n \leq 7$. Inspired by this work, we investigate submanifolds with commuting shape operators which are calibrated by parallel calibrations. Our main result is:

\begin{theorem} Assume  $(M^{n+m}, g ) $ is a Riemannian manifold, $\omega $ is a parallel calibration on $M$, and $\Sigma^n \subset M $ is calibrated by $\omega$. If for any two vectors $V, W \in N_p \Sigma$, the shape operators $A_V $ and $A_W $ commute, then $\Sigma$ is totally geodesic. \end{theorem}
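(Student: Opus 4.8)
\emph{Proof strategy.} The plan is to show directly that the second fundamental form $h$ of $\Sigma$ vanishes at each point $p$, which is exactly the statement that $\Sigma$ is totally geodesic. The mechanism is this: since $\Sigma$ is calibrated, a natural scalar built from $\omega$ and an orthonormal tangent frame is identically $1$, so all of its derivatives vanish; meanwhile, one particular second derivative, computed through the structure equations, yields $-|h(e_i,e_i)|^2$ for a frame adapted at $p$. Fix $p\in\Sigma$. Because the shape operators $A_V$ are mutually commuting self-adjoint operators on $T_p\Sigma$, choose an orthonormal basis $e_1,\dots,e_n$ of $T_p\Sigma$ diagonalizing all of them at once; writing $\eta_i:=h_p(e_i,e_i)\in N_p\Sigma$, this amounts to $h_p(e_i,e_j)=\delta_{ij}\eta_i$, equivalently $A_V e_i=\langle\eta_i,V\rangle\,e_i$ for all $V\in N_p\Sigma$. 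Fix $i$, let $c$ be the $\Sigma$-geodesic with $c(0)=p$ and $c'(0)=e_i$, and let $E_1(t),\dots,E_n(t)$ be the $\nabla^\Sigma$-parallel frame along $c$ with $E_k(0)=e_k$, so that $E_i(t)=c'(t)$. Since $\Sigma$ is calibrated, $\phi(t):=\omega_{c(t)}\big(E_1(t),\dots,E_n(t)\big)\equiv1$.

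\emph{A fact about calibrations.} I will use the standard first-order property that at a calibrated plane $\omega$ is killed by replacing one slot with a normal vector: if $\mathrm{span}(e_1,\dots,e_n)$ is calibrated and $V$ is orthogonal to it, then $\omega(e_1,\dots,V,\dots,e_n)=0$ with $V$ in any single slot. Indeed, taking $|V|=1$, the curve $s\mapsto\omega(e_1,\dots,(\cos s)e_j+(\sin s)V,\dots,e_n)$ is a curve of values of $\omega$ on unit simple $n$-vectors (the $n$ vectors stay orthonormal because $V\perp e_k$ for all $k$), equals $1$ at $s=0$, and is $\le1$ throughout by \eqref{calibration}; its derivative at $0$ equals $\omega(e_1,\dots,V,\dots,e_n)$, which must therefore vanish.

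\emph{The computation.} I differentiate $\phi$ along $c$ using $\bnab\omega=0$, the Gauss formula $\bnab_{c'}E_k=h(c',E_k)$ (valid because each $E_k$ is $\nabla^\Sigma$-parallel along the geodesic $c$), and the Weingarten formula $\bnab_{c'}\xi=-A_\xi c'+\nabla^\perp_{c'}\xi$ for a normal field $\xi$. This gives $\phi'(t)=\sum_j\omega_{c(t)}\big(E_1,\dots,h(c',E_j),\dots,E_n\big)$ with $h(c',E_j)$ in the $j$-th slot; each summand has a normal entry in a single slot, so by the fact above $\phi'\equiv0$, as it must. Differentiating once more and evaluating at $t=0$, the terms fall into three classes. (a) Terms in which two distinct slots carry $h_p(e_i,E_k)=\delta_{ik}\eta_i$ and $h_p(e_i,E_j)=\delta_{ij}\eta_i$: by commutativity both factors are nonzero only when $k=j=i$, impossible for $k\neq j$, so these terms vanish. (b) Terms carrying $(\nabla^\perp_{e_i}h)(e_i,e_j)$ in a single slot: these vanish by the fact above. (c) The Weingarten contribution, which by the diagonalization is $-A_{\eta_i}(e_i)=-|\eta_i|^2 e_i$ sitting in the $i$-th slot and giving $-|\eta_i|^2\,\omega_p(e_1,\dots,e_n)=-|\eta_i|^2$. (No curvature or Christoffel-type terms appear, since the $E_k$ are $\nabla^\Sigma$-parallel along the geodesic $c$.) Therefore $0=\phi''(0)=-|\eta_i|^2$, so $\eta_i=0$; letting $i$ range over $1,\dots,n$ yields $h_p=0$, and since $p$ was arbitrary, $\Sigma$ is totally geodesic.

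\emph{Where the hypothesis is used, and the main obstacle.} The commuting hypothesis is precisely what annihilates the ``two normal slots'' terms in class (a): absent it, no orthonormal basis of $T_p\Sigma$ diagonalizes $h$, the quantities $\omega_p(\dots,h_p(e_i,e_k),\dots,h_p(e_i,e_j),\dots)$ with $k\neq j$ need not vanish, and $\phi''(0)$ need not reduce to a sign-definite multiple of $|h_p|$. (Some extra hypothesis is indeed necessary: non-planar complex curves in $\mathbb{C}^2$ are calibrated but not totally geodesic.) The main work of the proof is thus the term-by-term expansion of $\phi''(0)$ through the Gauss and Weingarten equations, confirming that commutativity eliminates all the cross terms and leaves exactly $-|h(e_i,e_i)|^2$, which the constancy of $\phi$ then forces to be zero.
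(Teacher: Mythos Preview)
Your proof is correct and follows essentially the same approach as the paper: both differentiate the constant scalar $\omega(E_1,\dots,E_n)\equiv 1$ twice along $\Sigma$ (using $\bnab\omega=0$ to push derivatives onto the frame), use the commuting-shape-operator hypothesis to diagonalize $h$ and thereby annihilate the terms with normal vectors in two distinct slots, and read off from the surviving sign-definite term that $h\equiv 0$. The only cosmetic difference is that the paper takes the full Laplacian, packaged via its Lemma~\ref{Laplace} (which uses minimality to dispose of the single-normal-slot terms), obtaining $-|B|^2$ at once, whereas you take one second directional derivative per $e_i$, kill the single-normal-slot $(\nabla^\perp h)$ terms via your first-order calibration fact rather than via minimality, and obtain $-|\eta_i|^2$ for each $i$ separately.
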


In the case that $M$ is flat, the assumption that the shape operators of $\Sigma$ commute is equivalent to the normal curvature of $\Sigma$ vanishing. In particular we have the following for submanifolds of Euclidean space. 

\begin{corollary} 
If $\Sigma^n \subset \R^{n+m}$ is calibrated by a constant-coefficient calibration and the normal bundle $N\Sigma$ is flat, then $\Sigma $ is a plane. 
\end{corollary}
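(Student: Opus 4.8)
The plan is to prove the Theorem; the Corollary then follows immediately, since on flat $\R^{n+m}$ a constant-coefficient form is parallel, the flatness of $N\Sigma$ is equivalent to the commuting of the shape operators (as recorded above, via the Ricci equation), and a totally geodesic submanifold of Euclidean space is an affine plane.

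Fix $p \in \Sigma$, write $P = T_p\Sigma$ and $N = N_p\Sigma$, and let $\bar\omega$ be the function on the oriented Grassmannian $\mathrm{Gr}_n(T_pM)$ sending an oriented $n$-plane to the value of $\omega$ on its unit $n$-vector. Since $\omega$ is a calibration, $\bar\omega \le 1$, and since $\Sigma$ is calibrated, $\bar\omega(P) = 1$; hence $P$ is a maximum of $\bar\omega$, so $d\bar\omega_P = 0$ and the Hessian $\mathrm{Hess}\,\bar\omega_P$ is a well-defined (connection-free) symmetric bilinear form on $T_P\mathrm{Gr}_n(T_pM) = \Hom(P,N)$. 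I would first compute it. For $\Phi \in \Hom(P,N)$ and an oriented orthonormal frame $e_1,\dots,e_n$ of $P$, consider the curve of planes $P_t = \{\, v + t\Phi v : v \in P \,\}$ and expand $(e_1 + t\Phi e_1)\wedges(e_n + t\Phi e_n)$. Because each $\Phi e_i$ is orthogonal to $P$, the degree-one part of this $n$-vector is orthogonal to $e_1\wedges e_n$, so the normalization contributes $-\tfrac12 t^2|\Phi|^2 + O(t^3)$ to $\bar\omega(P_t)$, while evaluating $\omega$ on it gives $1 + t^2\sum_{i<j}\omega(e_1,\dots,\Phi e_i,\dots,\Phi e_j,\dots,e_n) + O(t^3)$ (the linear term drops because $d\bar\omega_P = 0$; here $\Phi e_i,\Phi e_j$ occupy slots $i,j$). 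Hence
\[
\mathrm{Hess}\,\bar\omega_P(\Phi,\Phi) \;=\; 2\sum_{i<j}\omega(e_1,\dots,\Phi e_i,\dots,\Phi e_j,\dots,e_n) \;-\; |\Phi|^2 .
\]
The essential feature is that the sum sees $\omega$ only through \emph{two distinct} altered slots, so it vanishes whenever $\Phi$ annihilates all but one of the $e_i$.

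Next I would identify the second fundamental form $h$ of $\Sigma$ with a null direction of this Hessian. Let $\gamma$ be a curve in $\Sigma$ with $\gamma(0) = p$ and $\gamma'(0) = e_k$, let $\tau_t$ be parallel transport in $M$ along $\gamma$, and set $c(t) = \tau_t^{-1}(T_{\gamma(t)}\Sigma) \in \mathrm{Gr}_n(T_pM)$. Since $\omega$ is parallel and $\Sigma$ is calibrated, $\bar\omega(c(t)) \equiv 1$; meanwhile, differentiating the Gauss map shows $c'(0) \in \Hom(P,N)$ is the map $\mathrm{II}_k\colon e_i \mapsto (\bar\nabla_{e_k}E_i)^\perp = h(e_k, e_i)$, where $E_i$ is any tangential extension of $e_i$ (the overall sign depends on conventions and is irrelevant below). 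As $\bar\omega\circ c$ is constant and $d\bar\omega_P = 0$, twice-differentiating at $t = 0$ gives $\mathrm{Hess}\,\bar\omega_P(\mathrm{II}_k, \mathrm{II}_k) = 0$.

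Finally I would invoke the hypothesis: since the $A_V$ commute, the frame $e_1,\dots,e_n$ may be chosen to diagonalize all of them simultaneously, so $h(e_k,e_i) = 0$ for $i \neq k$. Then $\mathrm{II}_k$ annihilates every $e_i$ with $i \neq k$, so the sum in the Hessian formula vanishes on $\Phi = \mathrm{II}_k$ and we are left with $0 = \mathrm{Hess}\,\bar\omega_P(\mathrm{II}_k,\mathrm{II}_k) = -|\mathrm{II}_k|^2$. Hence $\mathrm{II}_k = 0$, i.e.\ $h(e_k,\cdot) = 0$; letting $k$ range over $1,\dots,n$ shows $h$ vanishes at $p$, and since $p\in\Sigma$ was arbitrary, $\Sigma$ is totally geodesic. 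The part I expect to be most delicate is the Hessian computation — isolating the $-\tfrac12 t^2|\Phi|^2$ normalization term (which hinges on $\Phi e_i \perp P$) and confirming that $\mathrm{Hess}\,\bar\omega_P$ is genuinely connection-independent here — together with verifying that $c'(0)$ is precisely the $k$-th column of $h$.
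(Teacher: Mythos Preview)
Your argument is correct and is at heart the same mechanism as the paper's: a second-order consequence of the calibration identity $\omega(E_1\wedges E_n)\equiv 1$, combined with simultaneous diagonalization of the shape operators to kill the ``two normal slots'' cross terms $\omega(E_1,\dots,V_\alpha,\dots,V_\beta,\dots,E_n)$. The packaging, however, is genuinely different. The paper takes the full Laplacian of the tangent $n$-plane (Lemma~\ref{Laplace}), applies $\omega$, and uses minimality to eliminate the $\sum_i h_{ii}^\alpha$ terms that arise; what survives is $-|B|^2$ plus the cross terms, and diagonalization finishes it. You instead work pointwise on the Grassmannian: because $P=T_p\Sigma$ is a maximum of $\bar\omega$, the Hessian is connection-free, and the parallel-transported Gauss map gives curves along which $\bar\omega\equiv 1$, forcing $\mathrm{Hess}\,\bar\omega_P(\mathrm{II}_k,\mathrm{II}_k)=0$ for each $k$ separately. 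This direction-by-direction argument sidesteps the covariant Laplacian computation entirely and never invokes minimality explicitly (the acceleration $c''(0)$ is absorbed by $d\bar\omega_P=0$), which is a mild conceptual gain; the paper's approach, on the other hand, makes the connection to Simons-type identities and the existing normal-flat literature more transparent. Your caution about the Hessian computation is well-placed but the details you outline are correct: the normalization contributes exactly $-\tfrac12 t^2|\Phi|^2$ because $\Phi e_i\perp P$, and $c'(0)=\mathrm{II}_k$ follows from the standard identification of the covariant derivative with the derivative of parallel-transported sections.
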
 

The proof is based on a well-known formula for the Laplacian of the volume element. In the cases of complex submanifolds and special Lagrangian submanifolds, we also provide simple algebraic proofs. 

\section{Preliminaries}
Let $(M,g)$ be a Riemannian manifold and $\Sigma \subset M$ a submanifold. The restriction to $\Sigma$ of the tangent bundle $TM$ decomposes into the tangent and normal bundles of $\Sigma$:
$$TM|_\Sigma = T\Sigma \oplus N \Sigma.$$

The Levi-Civita connection $\bnab$ on $M$ induces connections on $T \Sigma$ and $N \Sigma$ defined by 
$$ \nabla_X Y = ( \bnab _X Y )^T , \quad \nabla _X V = (\bnab _X V )^N,$$ 
respectively, for $X \in T\Sigma$, $Y$ a local section of $T \Sigma$, and $V$ a local section of $N\Sigma$. Here, $(\cdot)^T$ and $(\cdot )^N $ are the orthogonal projections onto $T\Sigma$ and $N\Sigma$. The connections $\bnab$ and $\nabla$ are related by the second fundamental form, the section of $T^\ast \Sigma \otimes T^\ast \Sigma \otimes N\Sigma$ defined by 
$$ B(X,Y) = ( \bnab_X Y )^N.$$
For a tangent field $Y$ and normal field $V$, differentiating the equation $\langle Y, V \rangle =0$ gives 
$$ 0 = X \langle Y, V \rangle = \langle \bnab_X Y ,V \rangle + \langle Y , \bnab_X V \rangle = \langle B (X,Y) , V \rangle + \langle Y, \bnab _X V \rangle.$$
Thus, for a normal vector $V$, the shape operator $A_V$ is defined as a section of $T^\ast \Sigma \otimes T\Sigma$ implicitly by 
\begin{equation} \langle A_V(X), Y \rangle = - \langle B (X,Y), V \rangle, \label{AB} \end{equation} 
or equivalently, $A_V(X) = (\bnab _X  V )^T. $ 

Throughout this paper, $\{E_i\}$ will always denote an orthonormal frame for $T\Sigma$ and $\{V _\alpha \}$ will always denote an orthonormal frame for $N\Sigma$. With such frames chosen,
 we denote the components of the second fundamental form by 
$ h^\alpha_{ij} = \langle B ( E_i , E_j ) , V_\alpha \rangle.$

The mean curvature of $\Sigma$ is defined as the trace of $B$, 
$$ H = \sum_i B ( E_i, E_i) = \sum_{i, \alpha } h_{ii}^\alpha V_\alpha. $$

The normal connection on $\Sigma$ can be used to define a curvature operator on sections of $N \Sigma$: 

$$R^N_{X,Y} V = \nabla_X \nabla_Y V - \nabla _Y \nabla_X V - \nabla_{[X,Y]} V .$$

\begin{definition} A submanifold $\Sigma$ has a flat normal bundle if for any $X,Y \in T_p \Sigma$, $V \in N_p \Sigma$, 
$$R^N_{X,Y} V = 0.$$
\end{definition} 

Let $\bar R$ denote the Riemann curvature of $M$. The normal curvature of $\Sigma$ is related to the curvature of $M$ by the Ricci equation for $\Sigma$ \cite{Spivak}:
\begin{lemma}\label{NormalCurvature} If $X,Y \in T_p \Sigma$ and $V,W \in N_p \Sigma$, then 
$$ \langle   R^N _{XY }V , W  \rangle = \langle \bar R_{XY} V, W \rangle + \langle X , [ A_W, A_V ] (Y) \rangle .$$

\end{lemma}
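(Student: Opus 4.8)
The statement to prove is the Ricci equation, and the plan is the standard one: reduce to a pointwise identity, expand the ambient curvature $\bar R_{XY}V$ via the Gauss and Weingarten formulas, and separate the normal and tangential parts. First I would observe that each of the three terms in the asserted identity is tensorial in $X,Y,V,W$ (its value at $p$ depends only on the values of the arguments at $p$), so it suffices to verify the identity after extending $X,Y$ to local tangent vector fields and $V,W$ to local normal vector fields, chosen however is convenient. Then I would record the two structure equations that are implicit in the Preliminaries: for a tangent field $Z$, $\bnab_X Z = \nabla_X Z + B(X,Z)$, and for a normal field $V$, $\bnab_X V = A_V(X) + \nabla_X V$, the latter being exactly the decomposition $\bnab_X V = (\bnab_X V)^T + (\bnab_X V)^N$ together with $A_V(X) = (\bnab_X V)^T$.

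Next I would expand, using these two formulas in turn,
$$\bnab_X \bnab_Y V = \bnab_X\bigl(A_V(Y)\bigr) + \bnab_X\bigl(\nabla_Y V\bigr) = \nabla_X\bigl(A_V(Y)\bigr) + B\bigl(X,A_V(Y)\bigr) + A_{\nabla_Y V}(X) + \nabla_X\nabla_Y V,$$
subtract the same expression with $X$ and $Y$ interchanged, and subtract $\bnab_{[X,Y]}V = A_V([X,Y]) + \nabla_{[X,Y]}V$. Among the resulting terms, $\nabla_X(A_V(Y)) - \nabla_Y(A_V(X)) - A_V([X,Y])$ and $A_{\nabla_Y V}(X) - A_{\nabla_X V}(Y)$ are tangential (these are the ingredients of the Codazzi equation and play no role here), while $\nabla_X\nabla_Y V - \nabla_Y\nabla_X V - \nabla_{[X,Y]}V = R^N_{XY}V$ and $B(X,A_V(Y)) - B(Y,A_V(X))$ are normal. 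Projecting onto $N\Sigma$ therefore yields
$$(\bar R_{XY}V)^N = R^N_{XY}V + B\bigl(X,A_V(Y)\bigr) - B\bigl(Y,A_V(X)\bigr).$$

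Finally I would pair with the normal vector $W$ and convert the second-fundamental-form terms into shape operators through \eqref{AB}, using that each $A_V$ is self-adjoint (immediate from the symmetry of $B$): $\langle B(X,A_V(Y)),W\rangle = -\langle A_W(X),A_V(Y)\rangle = -\langle X, A_W A_V(Y)\rangle$ and $\langle B(Y,A_V(X)),W\rangle = -\langle A_W(Y),A_V(X)\rangle = -\langle X, A_V A_W(Y)\rangle$. Their difference is $-\langle X,(A_W A_V - A_V A_W)(Y)\rangle = -\langle X,[A_W,A_V](Y)\rangle$, so $\langle \bar R_{XY}V,W\rangle = \langle R^N_{XY}V,W\rangle - \langle X,[A_W,A_V](Y)\rangle$, which is the claimed identity after rearranging. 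There is no genuine obstacle here; the only thing to be careful about is bookkeeping — keeping track of the sign convention for $A_V$ adopted in this paper (opposite to the more common one, in which Weingarten reads $\bnab_X V = -A_V(X) + \nabla_X V$), and making sure the tangential terms are correctly discarded when taking the normal component.
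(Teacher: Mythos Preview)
Your proof is correct and follows essentially the same route as the paper: expand via the Gauss and Weingarten decompositions, take the normal component to isolate $R^N_{XY}V$ and the two $B(\cdot,A_V(\cdot))$ terms, then pair with $W$ and convert using \eqref{AB} and the self-adjointness of the shape operators. The only cosmetic difference is that the paper chooses the extensions with $[X,Y]=0$ at the outset (so the bracket term never appears) and begins the computation from $R^N_{XY}V$ rather than from $\bar R_{XY}V$, but the algebra and the key identity $(\bar R_{XY}V)^N = R^N_{XY}V + B(X,A_V(Y)) - B(Y,A_V(X))$ are identical.
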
 

\begin{proof} Extend $V$ to a local section of $N \Sigma$, and extend $X$ and $Y$ to local sections of $T\Sigma$ with $[X,Y]=0$. Using the definitions of the normal connection and shape operator, 
\begin{equation*} 
\begin{aligned} 
 \nabla_X \nabla_Y V - \nabla _Y \nabla _X V  
 & = \nabla_X ( \bnab _Y V - ( \bnab_Y V )^T ) - \nabla_Y ( \bnab _X V - ( \bnab_X V )^T ) \\
 & = \nabla_X ( \bnab _Y V - A_V (Y)  ) - \nabla_Y ( \bnab _X V - A_V (X)) \\
 & = [ \bnab_X ( \bnab _Y V - A_V (Y)  )]^N -[ \bnab_Y ( \bnab _X V - A_V (X))]^N \\
 & = (\bar R_{XY} V )^N - B ( X, A_V (Y) ) + B ( Y , A_V (X)).
\end{aligned} 
\end{equation*} 
Testing against $W$, 
\begin{equation*} 
\begin{aligned} 
 \langle R^N_{XY} V , W \rangle &=  \langle \bar R_{XY} V , W \rangle -  \langle  B ( X , A_V (Y ) ) , W \rangle + \langle  B ( Y , A_V (X ) ) , W \rangle \\ 
 & = \langle \bar R_{XY} V , W \rangle + \langle  X, A_W(A_V(Y)) \rangle-  \langle  A_W ( Y), A_V ( X)  \rangle 
 \end{aligned} 
\end{equation*} 
 using \eqref{AB}. 
 Finally, use the fact that $A_V$ is self-adjoint to obtain 
 $$  \langle R^N_{XY} V , W \rangle =  \langle \bar R_{XY} V , W \rangle + \langle  X, [A_W, A_V](Y) \rangle. \qedhere$$
 \end{proof} 
 
 From Lemma \ref{NormalCurvature}, we see that if $M$ is flat ($\bar R =0$), then $\Sigma$ has flat normal bundle if and only if $[A_W , A_V ]=0$ for any $V,W \in N_p \Sigma.$ 

The useful fact for our purposes is that commuting endomorphisms can be simultaneously diagonalized, so submanifolds whose shape operators all commute admit local frames in which all of the shape operators are diagonal. 

We conclude this section by computing the Laplacian of the tangent plane of $\Sigma$ as a section of the bundle $\bigwedge^n ( TM | _\Sigma) .$ See \cite{Xin, Wang} for a similar calculation or \cite{Fischer-Colbrie} for a dual version.

\begin{lemma}\label{Laplace} If $\Sigma \subset M$ is minimal, then 
$$ \triangle ( E_1 \wedges E_n )  = - |B|^2  E_1 \wedges E_n  + \sum _i \sum_{ j \neq k , \alpha , \beta } h^\alpha _{ ij } h^\beta _{ik}  E_1 \wedges V_\alpha \wedges V_\beta \wedges E_n, $$
where $V_\alpha$ and $V_\beta$ appear in the $j^\text{th}$ and $k^\text{th}$ positions in the wedge product, respectively. 
\end{lemma}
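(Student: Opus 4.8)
The plan is to compute the (rough/connection) Laplacian of the section $E_1 \wedges E_n$ of $\bigwedge^n(TM|_\Sigma)$ directly, working at a fixed point $p$ with a frame chosen so that $\nabla_{E_i} E_j = 0$ at $p$ (a geodesic frame for $T\Sigma$) and so that $\nabla_{E_i} V_\alpha = 0$ at $p$ (possible by parallel-transporting a normal frame along geodesics through $p$, so that $\bnab_{E_i} V_\alpha$ has no $N\Sigma$ component at $p$). With these normalizations, for a section $\xi$ of $TM|_\Sigma$ the connection Laplacian is $\triangle \xi = \sum_i \bnab_{E_i}\bnab_{E_i}\xi$ at $p$. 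First I would record the first covariant derivative: since $\bnab_{E_i} E_j = \nabla_{E_i}E_j + B(E_i,E_j)$ and $\nabla_{E_i}E_j=0$ at $p$, we get $\bnab_{E_i}(E_1\wedges E_n) = \sum_j E_1 \wedges B(E_i,E_j) \wedges E_n$, with $B(E_i,E_j) = \sum_\alpha h^\alpha_{ij} V_\alpha$ inserted in the $j$-th slot.

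Next I would differentiate once more. Applying $\bnab_{E_i}$ to $\sum_j E_1 \wedges B(E_i,E_j)\wedges E_n$ produces two types of terms: (i) terms where $\bnab_{E_i}$ hits the factor $B(E_i,E_j)$ in the $j$-th slot, and (ii) terms where $\bnab_{E_i}$ hits one of the remaining tangential factors $E_k$ ($k\neq j$), turning it into $B(E_i,E_k) = \sum_\beta h^\beta_{ik}V_\beta$ in the $k$-th slot. Summing type (ii) over $i$ and over $j\neq k$ gives exactly the mixed term $\sum_i\sum_{j\neq k,\alpha,\beta} h^\alpha_{ij}h^\beta_{ik}\, E_1\wedges V_\alpha\wedges V_\beta \wedges E_n$ claimed in the statement. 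For type (i), I need $\sum_i \bnab_{E_i}\big(\sum_j B(E_i,E_j)\big)$ placed in the $j$-th slot; here I would split $\bnab_{E_i}B(E_i,E_j)$ into its tangential and normal parts. The normal part is $(\bnab_{E_i}B)(E_i,E_j)$ up to the Codazzi-type correction, and summing over $i$ and using minimality (so that the relevant trace of $\nabla^\perp B$ vanishes, modulo an ambient-curvature term which contributes a tangential vector) leaves a purely tangential contribution in the $j$-th slot; such a term is proportional to $E_1\wedges E_n$ itself. The tangential part of $\bnab_{E_i}B(E_i,E_j)$ is, by \eqref{AB}, $-\sum_k h^\alpha_{ij}h^\alpha_{ik}E_k$ summed appropriately, i.e. $-\langle B(E_i,E_j),B(E_i,E_k)\rangle E_k$; when this lands back in the $j$-th slot with $k=j$ it contributes $-|B|^2 E_1\wedges E_n$ after summing over $i,j$, and when $k\neq j$ it is a repeated factor and vanishes in the wedge. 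Collecting, the tangential-to-$E_1\wedges E_n$ terms assemble into $-|B|^2 E_1\wedges E_n$ (the ambient-curvature and divergence-of-$B$ pieces must cancel or be absorbed, which is where minimality and the Codazzi equation are used).

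The main obstacle is bookkeeping: carefully separating, at each of the two differentiations, which wedge slot each derivative lands in, and tracking the tangential versus normal decomposition of $\bnab_{E_i}B(E_i,E_j)$ so that the minimality hypothesis is invoked correctly to kill the first-order (divergence) term. A secondary subtlety is justifying that the $E_k$ with $k\neq j$ terms coming from the tangential part of $\bnab B$ genuinely vanish because they duplicate an existing factor in the wedge — this is immediate once one is careful that the index $j$ is fixed when reinserting into the $j$-th slot. I would also note explicitly that since $\omega$ is parallel (or, in the corollary, constant-coefficient) and $\Sigma$ is calibrated, $\Sigma$ is automatically minimal, so the hypothesis of Lemma \ref{Laplace} is met in the application; but for the lemma itself only minimality is needed, and the ambient curvature term — present in general — is exactly the one that, together with minimality, leaves only the two stated terms when $\bar R$ is suitably accounted for (in the flat or parallel-calibration setting this term's contribution to the coefficient of $E_1\wedges E_n$ is what it is, and the statement as written is the minimal-submanifold identity with ambient curvature suppressed/absorbed).
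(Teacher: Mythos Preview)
Your overall strategy matches the paper's: geodesic frames at $p$ for both $T\Sigma$ and $N\Sigma$, first derivative $\bnab_{E_i}(E_1\wedges E_n)=\sum_j E_1\wedges B(E_i,E_j)\wedges E_n$, then a second derivative split into type~(ii) terms (where $\bnab_{E_i}$ hits some other $E_k$, producing the mixed sum) and type~(i) terms (where it hits $B(E_i,E_j)$), with the tangential piece of~(i) giving exactly $-|B|^2\,E_1\wedges E_n$.

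The muddled step is your handling of the \emph{normal} piece of type~(i). You write that the Codazzi/curvature correction ``contributes a tangential vector'' and that what survives is ``a purely tangential contribution in the $j$-th slot \dots proportional to $E_1\wedges E_n$.'' Both claims are off: at $p$ the normal part of $\bnab_{E_i}(h^\alpha_{ij}V_\alpha)$ is $E_i(h^\alpha_{ij})\,V_\alpha$, a normal vector, and the Codazzi correction $(\bar R_{E_iE_j}E_i)^N$ is likewise normal. The paper's actual device is an index swap: using symmetry of $B$ (torsion-freeness of $\bnab$) it rewrites $E_i(h^\alpha_{ij})$ as $E_j(h^\alpha_{ii})$ up to terms that vanish at $p$ in the chosen frame, so that after summing over $i$ one gets $E_j\langle H,V_\alpha\rangle\,V_\alpha=0$ by minimality. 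Thus the normal piece of type~(i) \emph{vanishes outright}; it does not survive as a coefficient of $E_1\wedges E_n$. Your instinct to invoke the trace of $\nabla^\perp B$ and Codazzi is correct, but the precise statement you need is $\sum_i(\nabla^\perp_{E_i}B)(E_i,E_j)=\nabla^\perp_{E_j}H+\sum_i(\bar R_{E_iE_j}E_i)^N$, with the first term zero by minimality and the second (ambient-curvature) term being the one that both you and the paper quietly drop. Once you make that swap explicit, your argument is the paper's.
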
 

\begin{proof} We calculate at a fixed point $p$ with the orthonormal frames $\{E_i\}$ and $\{V_\alpha\}$ chosen so that $\nabla E_i= 0 $ and $\nabla V_\alpha=0$ at $p$. First note that in an orthonormal frame, 
\begin{equation} \label{Christoffel} 2 \langle E_j , \nabla _{E_i} E_j \rangle = E_i \langle E_j , E_j \rangle =0.\end{equation} 
Differentiating the tangent plane once, 
\begin{equation*} 
\begin{aligned} 
 \bnab_{E_i } ( E_1 \wedges E_n ) &= \sum_j  \big [ E_1 \wedges (\bnab _{E_i } E_j)^T \wedges E_n \\ 
 & \quad  +  E_1 \wedges (\bnab _{E_i } E_j)^N \wedges E_n \big] .\\
\end{aligned}
\end{equation*} 
For the first term, only the $E_j$ component of $(\bnab _{E_i } E_j)^T$ produces a nonzero wedge product, but by \eqref{Christoffel}, the $E_j$ component vanishes. For the terms involving the normal part, we substitute $(\bnab _{E_i } E_j)^N = h^\alpha _{ij} V_\alpha$ (here we use the Einstein summation convention for Greek indices). Now we take another covariant derivative: 

\begin{align}
\bnab _{E_i } (E_1 \wedges (h_{ij}^\alpha V_\alpha) \wedges E_n) 
& = E_1 \wedges \bnab _{E_i }( h_{ij}^\alpha V_\alpha ) \wedges E_n \notag \\& \quad +  \sum_{k \neq j } E_1 \wedges h_{ij} ^ \alpha V_\alpha  \wedges h_{ik} ^ \beta V_\beta \wedges E_n . \label{SecondDerivative}
\end{align} 

We have 
\begin{align*}  \bnab_{E_i} (  h_{ij} ^\alpha V_\alpha ) &= E_i \langle \bnab _{E_i } E_j , V_\alpha \rangle V_\alpha + h_{ij}^\alpha \bnab_{E_i} V_\alpha \\
& = E_i \langle \bnab_{E_j } E_i , V_\alpha \rangle V_\alpha - \sum_k h_{ij}^\alpha h_{ik}^\alpha E_k \\
& = - E_i \langle E_i , \bnab_{E_j} V_\alpha \rangle V_\alpha -  \sum_k h_{ij}^\alpha h_{ik}^\alpha E_k\\
& = - \langle \bnab_{E_i} E_i , \bnab_{E_j } V_\alpha \rangle - \langle E_i , \bnab_{ E_i } \bnab_{E_j } V_{\alpha}\rangle  V_\alpha  - \sum_k h_{ij}^\alpha h_{ik}^\alpha E_k \\ 
& = - \langle E_i , \bnab_{E_j} \bnab_{E_i} V_\alpha \rangle V_\alpha - \sum_k h_{ij}^\alpha h_{ik}^\alpha E_k \\ 
& = - E_j \langle E_i , \bnab_{E_i } V_\alpha \rangle V_\alpha +\langle  \bnab_{E_j} E_i , \bnab_{E_i } V_\alpha \rangle V_\alpha - \sum_k h_{ij} ^\alpha  h_{ik}^\alpha E_k 
\\ & = - E_j ( h_{ii}^\alpha ) - \sum_k h_{ij} ^\alpha  h_{ik}^\alpha E_k .
\end{align*} 
Substituting this expression into the wedge product in \eqref{SecondDerivative}, only the $E_j$ term of the sum survives, 
so 
\begin{align} 
\bnab _{E_i } (E_1 \wedges (h_{ij}^\alpha V_\alpha) \wedges E_n) 
& = - \sum_\alpha ( h_{ii}^\alpha+ (h_{ij}^\alpha) ^ 2 ) E_1 \wedges E_n \notag \\& \quad +  \sum_{k \neq j } E_1 \wedges h_{ij} ^ \alpha V_\alpha  \wedges h_{ik} ^ \beta V_\beta \wedges E_n . \label{SecondDerivative2} 
\end{align} 
Upon summing \eqref{SecondDerivative2} over $i$, we have 
$$ \sum_i h_{ii}^\alpha = \langle H , V_\alpha \rangle =0 $$ since $\Sigma$ is minimal.
Now since $|B|^2 = \sum_{i,j,\alpha} (h_{ij}^\alpha)^2 $, summing over $j$ proves the lemma. 
\end{proof} 

\section{Proof of theorem 1.1} 

\begin{proof} Since $\omega$ calibrates $\Sigma$, 
$$ \omega ( E_1\wedge \dots\wedge E_n ) = 1.$$
Combining this with the assumption that $\omega$ is parallel, 
\begin{equation*} \begin{aligned} 
0 & = E_i [\omega ( E_1 \wedge\dots\wedge E_n ) ] \\
&= \bnab_{E_i } ( \omega ) ( E_1 \wedges E_n ) + \omega ( \bnab_{E_i} ( E_1 \wedges E_n ))\\ 
& = \omega ( \bnab_{E_i} ( E_1 \wedges E_n )). \end{aligned} \end{equation*} 
Differentiating again and summing,  
$$ \triangle [ \omega ( E_1 \wedge \dots \wedge E_n ) ] = \omega ( \triangle ( E_1 \wedge \dots \wedge E_n) ) .$$
By Lemma \ref{Laplace}, 
$$  \triangle ( E_1 \wedge \dots \wedge E_n) = \sum_{i} \sum_{j \neq k , \alpha , \beta } h_{ij}^\alpha h_{ik}^\beta ( E_1 \wedge \dots \wedge V_\alpha \wedge \dots \wedge V_\beta \wedge \dots \wedge E_n ) - |B|^2 ( E_1 \wedge \dots \wedge E_n) .$$
Since $\Sigma$ has commuting shape operators, the frame $\{E_i\}$ can be chosen so that $[h^\alpha_{i j} ]$ is diagonal for all $\alpha.$ But since $j \neq k$ in the summation, one of $h^\alpha _{ij} $ and $h^\beta _{ik}$ must be zero in each term. Therefore 
$$ 0=\triangle [\omega ( E_1 \wedge \dots \wedge E_n )] = - |B|^2,$$
so $\Sigma$ is totally geodesic.
\end{proof} 

For the cases of complex submanifolds or minimal Lagrangian submanifolds of a K\"ahler manifold, this rigidity result can be observed directly from algebraic consequences of the interaction between the normal bundle and the ambient complex structure without taking the Laplacian of the tangent plane. 

Let $J$ denote the ambient complex structure. If $M$ is K\"ahler and $\Sigma$ is a complex submanifold, then $J$ preserves the tangent and normal bundles of $\Sigma.$ If the shape operators of $\Sigma$ commute, then for any choice of normal vector $V$, the operators $A_V$ and $A_{JV}$ can be simultaneously diagonalized,
but 
$$ \langle \bnab _{E_i} V, E_i \rangle = \langle \bnab _{E_i} JV, JE_i \rangle = \langle \bnab_{JE_i} JV, E_i \rangle,$$ using the fact that $J$ is parallel, so 
$ A_{JV}$ is diagonal only if $A_V =0$.

If $\Sigma$ is instead Lagrangian, then $J$ is an isomorphism between $T\Sigma$ and $N\Sigma$, so we may translate any calculation involving normal vectors to an intrinsic calculation. In particular, 
$ J ( B( X,Y ))$ takes values in $T\Sigma$. 
We can then define a section $\tilde B$ of $\bigotimes^3 T^\ast \Sigma$ by 
$$ \tilde B ( X,Y,Z) = \langle B ( X,Y ) , J Z \rangle.$$
Since $$\langle B ( X,Y ) , J Z \rangle = \langle \bnab _X Y , J Z \rangle =- \langle Y , J \bnab_X Z  \rangle = \langle J Y , B ( X,Z) \rangle ,$$
the tensor $\tilde B$ is symmetric in all three arguments. This means that $h_{ij}^\alpha = h_{i \alpha }^j .$ But if $[ h_{ij}^\alpha]$ is diagonal for each $\alpha$, then the only nonzero components are $h_{ii}^i$, which can be seen by choosing $i \neq \alpha$ in the previous identity. By minimality, $$0=\sum _{ i} h_{ii}^\alpha= h_{\alpha \alpha}^\alpha,$$
so the entire second fundamental form vanishes. 

McLean \cite[Prop. 4.2]{McLean} showed that the normal bundle of a coassociative submanifold $\Sigma$ of a $G_2$ manifold is isomorphic to the bundle of antiselfdual 2-forms, and hence is also an intrinsic object. It is therefore natural to ask whether there is a simple algebraic proof that coassociative submanifolds with commuting shape operators are totally geodesic. 

\textbf{Acknowledgment.} This work is supported by the NSF Graduate Research Fellowship Program under grant DGE-2140004 and by NSF grant DMS-2453862. The author thanks Yu Yuan for helpful discussions.
\bibliographystyle{amsplain}
\bibliography{nfc}

\end{document}